\newlist{enumroman}{enumerate}{1}
\setlist[enumroman]{font=\normalfont,label=(\roman*),leftmargin=0.3in}
\theoremstyle{plain}
\newtheorem{thm}{Theorem}[section]
\newtheorem{cor}[thm]{Corollary}
\newtheorem{lem}[thm]{Lemma}
\newtheorem{pro}[thm]{Proposition}
\theoremstyle{definition}
\newtheorem{defi}[thm]{Definition}
\newtheorem{rem}[thm]{Remark}
\newtheorem*{ackn}{Acknowledgment}
\title[Graph equivariant cohomological rigidity for GKM~graphs]{Graph equivariant cohomological rigidity\\for GKM~graphs}
\author{Matthias Franz}
\address{Department of Mathematics, University of Western Ontario, London, Ont.\ N6A\;5B7, Canada}
\email{mfranz@uwo.ca}
\author{Hitoshi Yamanaka}
\address{Osaka City University Advanced Mathematical Institute, 3-3-138, Sugimoto, Sumi\-yoshi-ku, Osaka, 558-8585 Japan}
\email{yamanaka@sci.osaka-cu.ac.jp}
\subjclass[2010]{Primary 55N91; secondary 57S15}
\begin{document}

\begin{abstract}
We formulate the notion of an isomorphism of GKM~graphs. We then show that two GKM~graphs have isomorphic graph equivariant cohomology algebras if and only if 
the graphs are isomorphic.
\end{abstract}
\maketitle

\section{Introduction}

In~\cite[Thm.~7.2]{GKMequivariant}, Goresky--Kottwitz--MacPherson found
a remarkable combinatorial description of the equivariant cohomology, with complex coefficients, of a complex projective variety with an algebraic torus action
having only finitely many $1$-dimensional orbits.
Subsequently, Guillemin--Zara~\cite[\S1.6]{GZequivariant} generalized
their work 
to a wide class of closed $T$-manifolds, where $T$ is a compact torus. Nowadays, a manifold appearing in this class is called a GKM~manifold.

A GKM~manifold~$X$ determines an edge-labeled graph~$\mathcal{G}_{X}$ that encodes the structure
of the equivariant $1$-skeleton of~$X$ and the weights of the tangential real representations. Using these data one can define the graph equivariant cohomology~$H_{T}^{*}(\mathcal{G}_{X})$ in a purely combinatorial way.
It is a sub graded $H^{*}(BT)$-algebra of the algebra of functions from the fixed point set~$X^{T}$ to~$H^{*}(BT)$.

The theorem of Goresky--Kottwitz--MacPherson and of Guillemin--Zara mentioned above asserts that
if the equivariant cohomology of a GKM~manifold~$X$ is free over~$H^{*}(BT)$, then $H_{T}^{*}(X)$ is canonically
isomorphic to~$H_{T}^{*}(\mathcal{G}_{X})$ as an $H^{*}(BT)$-algebra.
This holds for complex coefficients, and also for integral coefficients provided that all isotropy groups in~$X$ are connected, see Remark~\ref{rem:different} below.

The work of Guillemin--Zara has another important aspect: they established an axiomatic formulation of the framework above by introducing
the notion of an abstract GKM~graph~$\mathcal{G}$ and its graph equivariant cohomology~$H_{T}^{*}(\mathcal{G})$ with integral coefficients, which is a graded algebra over the integral cohomology $H^{*}(BT)$.

Toric manifolds are important examples of GKM~manifolds.
In~\cite[Thm.~1.1]{Mequivariant} Masuda proved that the equivariant isomorphism type of a toric manifold,
considered as a complex algebraic variety with an algebraic torus action, is completely determined by its torus equivariant cohomology algebra with integral coefficients.
This work has led 
to a classification problem in toric topology which is nowadays called the cohomological rigidity problem. The aim of the present note is to generalize Masuda's result to arbitrary GKM~graphs.

Let $\mathcal{G}$ and~$\mathcal{G}'$ be two abstract GKM~graphs
defined for the same torus~$T$ (see Definition~\ref{defofGKM-graph}). We denote by~$H_{T}^{*}(\mathcal{G})$ and~$H_{T}^{*}(\mathcal{G}')$ the corresponding graph equivariant cohomology of~$\mathcal{G}$ and~$\mathcal{G}'$, respectively (see Definition~\ref{defofgraph}).
In Definition~\ref{defofisomorphism} we will introduce the notion of an isomorphism~$\varphi \colon \mathcal{G}'\rightarrow \mathcal{G}$. 
Our main theorem is the following:
\begin{thm}
  \label{mainthm}
$H_{T}^{*}(\mathcal{G})$ and~$H_{T}^{*}(\mathcal{G}')$ are isomorphic as $H^{*}(BT)$-algebras if and only if $\mathcal{G}$ and~$\mathcal{G}'$ are isomorphic as GKM~graphs.
\end{thm}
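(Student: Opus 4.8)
The plan is to build an explicit dictionary between the two notions of isomorphism: the vertices of a GKM~graph will be recovered from the idempotents of its localized cohomology, and the axial function from divisibility relations inside the cohomology. Throughout write $R=H^{*}(BT)$, let $K$ be its field of fractions, and recall from Definition~\ref{defofgraph} that $H_{T}^{*}(\mathcal{G})$ is the $R$-subalgebra of $\operatorname{Map}(V,R)$, with $V$ the vertex set of~$\mathcal{G}$, consisting of those $f$ satisfying $f(v)\equiv f(w)\pmod{\alpha(e)}$ for every edge~$e$ joining $v$ and~$w$.

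The forward implication is formal. An isomorphism $\varphi\colon\mathcal{G}'\to\mathcal{G}$ is in particular a bijection of vertices and of edges preserving the axial function, so precomposition $f\mapsto f\circ\varphi$ carries each defining congruence of $H_{T}^{*}(\mathcal{G})$ to the corresponding one of $H_{T}^{*}(\mathcal{G}')$. Hence $\varphi^{*}$ is a degree-preserving $R$-algebra isomorphism, with inverse induced by~$\varphi^{-1}$.

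For the converse I would begin by localizing. Since every label $\alpha(e)$ is a nonzero element of~$R$, the congruences become vacuous over~$K$; concretely, the class $\tau_{v}\in H_{T}^{*}(\mathcal{G})$ supported at a single vertex, with $\tau_{v}(v)=\prod_{e\ni v}\alpha(e)$ and $\tau_v(w)=0$ otherwise, shows that $H_{T}^{*}(\mathcal{G})\otimes_{R}K=\operatorname{Map}(V,K)$. A given $R$-algebra isomorphism $\psi\colon H_{T}^{*}(\mathcal{G})\to H_{T}^{*}(\mathcal{G}')$ then extends to a $K$-algebra isomorphism $\operatorname{Map}(V,K)\to\operatorname{Map}(V',K)$, which must permute the primitive idempotents and so yields a bijection $\sigma\colon V\to V'$. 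Expanding $f=\sum_{v}f(v)\,e_{v}$ in idempotents and using $K$-linearity gives $\psi(f)=f\circ\sigma^{-1}$, whence $H_{T}^{*}(\mathcal{G})=\sigma^{*}H_{T}^{*}(\mathcal{G}')$ as subalgebras of $\operatorname{Map}(V,R)$. The whole problem is thereby reduced to a purely combinatorial reconstruction statement: \emph{the axial function is determined by the subalgebra it cuts out.}

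To prove that reconstruction I would, for each primitive $\alpha\in H^{2}(BT)$, characterize the relation ``$\alpha\mid f(v)-f(w)$ for all~$f$'' intrinsically. Using classes supported on a union of vertices---the minimal $R$-multiple of $\sum_{x\in C}e_{x}$ lying in $H_{T}^{*}(\mathcal{G})$, whose denominator is a product of labels coprime to~$\alpha$---one checks that this relation holds exactly when $v$ and $w$ lie in the same connected component of the subgraph of $\alpha$-labelled edges. Thus the subalgebra records, for each~$\alpha$, only the partition of~$V$ into $\alpha$-components. The main obstacle is that this partition is a priori coarser than the edge set, since the edges internal to a component are invisible to the congruences; the heart of the argument is to identify the two. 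Here the GKM~axiom of Definition~\ref{defofGKM-graph}, that the labels of the edges at each vertex are pairwise linearly independent, is decisive: it forces every vertex to meet at most one $\alpha$-edge, so the $\alpha$-subgraph is a matching and its components \emph{are} its edges. This recovers the axial function, and transporting through~$\sigma$ shows that $\sigma$ preserves edges and labels; the remaining compatibility with the connections is then supplied by Definition~\ref{defofisomorphism} together with the fact that, under these independence hypotheses, the connection is the unique one compatible with the recovered axial function. Hence $\sigma$ is an isomorphism of GKM~graphs.
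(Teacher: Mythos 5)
Your proposal is correct in its essentials, but it is a genuinely different argument from the paper's, and in fact it proves slightly more. Both proofs first recover the vertex set and then the pair polynomials~$P_{pq}$, which by Definition~\ref{defofisomorphism} is exactly the data an isomorphism must preserve. For the first step the paper characterizes the Thom classes ring-theoretically via the zero-divisor structure (Lemma~\ref{characterizationofthom0}, Proposition~\ref{charequivthom}); your localization argument accomplishes the same and in addition shows directly that any $H^{*}(BT)$-algebra isomorphism \emph{is} pullback along a vertex bijection~$\sigma$. The real divergence is in the second step. The paper recovers $\pm P_{pq}$ from the identity $P_{pq}^{2}f=\tau_{p}^{2}+\tau_{q}^{2}$, where the membership $f\in H_{T}^{*}(\mathcal{G})$ (with $f(p)=P^{2}$, $f(q)=Q^{2}$) rests on Lemma~\ref{keylemma} and Corollary~\ref{EO}, hence on the existence of a parallel transport. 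You instead recover the labels edge by edge: for primitive~$\alpha$, the relation ``$\alpha\mid f(v)-f(w)$ for all~$f$'' holds iff $v$ and~$w$ lie in the same $\alpha$-component, because for $w$ outside the $\alpha$-component~$C$ of~$v$ the class $\bigl(\prod_{e\in\partial C}\alpha(e)\bigr)\sum_{x\in C}e_{x}$, where $\partial C$ is the set of edges with exactly one endpoint in~$C$, lies in~$H_{T}^{*}(\mathcal{G})$ and takes at~$v$ a value coprime to~$\alpha$ (edges leaving an $\alpha$-component are not $\alpha$-edges, and non-proportional primitive linear forms are coprime in~$H^{*}(BT)$); the GKM condition makes the $\alpha$-subgraph a matching, so its nontrivial components are single edges, and unique factorization together with primitivity then returns $P_{pq}$ up to sign. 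Nothing in this uses the parallel transport, so your argument establishes Theorem~\ref{mainthm} for an arbitrary graph with an axial function as in Definition~\ref{defofaxial}, whether or not a connection exists---a strengthening of the statement as formulated via Definition~\ref{defofGKM-graph}. What the paper's route buys in exchange is an explicit, intrinsic ring-theoretic characterization of the Thom classes and of~$\pm P_{pq}$ without localizing, and it exhibits precisely how the connection enters the algebra.

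One sentence of yours should be struck: the closing claim that ``the connection is the unique one compatible with the recovered axial function'' is false in general (uniqueness requires stronger hypotheses than pairwise independence, e.g.\ $3$-independence of the labels at each vertex). It is, however, also irrelevant: an isomorphism in the sense of Definition~\ref{defofisomorphism} is only required to satisfy $P_{\varphi_{\mathcal{V}}(p')\varphi_{\mathcal{V}}(q')}=\pm P_{p'q'}$, with no compatibility with parallel transports demanded. Your reconstruction already delivers exactly that, so the proof is complete once that sentence is deleted.
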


For a toric manifold~$X$, the GKM~graph~$\mathcal{G}_{X}$ and the fan~$\Sigma _{X}$ are essentially the same object.
Since isotropy groups in toric varieties are connected, Theorem~\ref{mainthm} generalizes Masuda's equivariant rigidity theorem to abstract GKM~graphs.

Throughout this note, we fix a compact torus~$T$ of rank~$r$ as well as positive integers~$n$ and~$n'$.
Note that $H^{*}(BT)$ can be regarded as the polynomial ring~$\mathbb{Z}[x_{1},\ldots,x_{r}]$ with the grading $\text{deg}\ x_{i}=2$.
In particular, it is a unique factorization domain.
For two polynomials~$P,Q\in H^{*}(BT)$, we write $P\mid Q$ if $Q=RP$ for some~$R\in H^{*}(BT)$.
We denote by~$|S|$ the number of elements of a finite set~$S$.

\section{Graph equivariant cohomology}
In this section we recall the notion of an abstract GKM~graph and its graph equivariant cohomology. The original paper is~\cite{GZequivariant}. We also introduce the notion of an isomorphism of GKM~graphs.

Let $\mathcal{G}$ be a finite $n$-valent undirected graph (multi-edges are allowed, but loops are not) with vertex set~$\mathcal{V}$.
We denote by~$\mathcal{E}$ the set of directed edges of~$\mathcal{G}$. (Note that $\mathcal{E}$ is not the set of edges of~$\mathcal{G}$; the cardinality of~$\mathcal{E}$ is twice that of the edge set.) For each~$e\in\mathcal{E}$, we denote by
$\overline{e}$ the directed edge obtained by reversing the direction of~$e$. Let $i(e)$ and~$t(e)$ be the initial and terminal point of a directed edge~$e$, 
respectively. We also use the following notation for vertices~$p$ and~$q$:
\[
\mathcal{E}_{p}\coloneqq \{\, e\in\mathcal{E}\mid i(e)=p\,\},
\]
\[
\mathcal{E}_{pq}\coloneqq \{\, e\in\mathcal{E}\mid i(e)=p,\:t(e)=q\,\}.
\]
\begin{defi}\label{defofaxial}
  A map~$\alpha \colon \mathcal{E}\rightarrow H^{2}(BT)$ is called an \textit{axial function} on~$\mathcal{G}$ if it satisfies the following three conditions
  for all~$e$,~$e'\in\mathcal{E}$:
\begin{enumroman}
\item \label{defofaxial-1}
  $\alpha (\overline{e})=\pm\alpha (e)$.
\item 
  (\textit{GKM~condition}) $\alpha (e)$ and~$\alpha (e')$ are linearly independent over~$\mathbb{Z}$ if $e\not= e'$ and~$i(e)=i(e')$.
\item \label{defofaxial-3}
  (\textit{Primitivity}) The greatest common divisor of the coefficients of~$\alpha (e)$ is $1$.
\end{enumroman}
\end{defi}

\begin{defi}\label{defofparallel}
Let $\alpha $ be an axial function on~$\mathcal{G}$. A \textit{parallel transport} of~$(\mathcal{G},\alpha )$ is a family~$\mathcal{P}=\{\mathcal{P}_{e}\}_{e\in\mathcal{E}}$ of 
bijections~$\mathcal{P}_{e}\colon \mathcal{E}_{i(e)}\rightarrow \mathcal{E}_{t(e)}$ satisfying the following conditions
for all~$e\in\mathcal{E}$ and all~$e'\in\mathcal{E}_{i(e)}$:
\begin{enumroman}
\item 
  $\mathcal{P}_{\overline{e}}=\mathcal{P}_{e}^{-1}$.
\item 
  $\mathcal{P}_{e}(e)=\overline{e}$.
\item \label{defofparallel-3}
  $\alpha (\mathcal{P}_{e}(e'))-\alpha (e')\in \mathbb{Z}\,\alpha (e)$.
\end{enumroman} 
\end{defi}

\begin{defi}\label{defofGKM-graph}
An \textit{abstract GKM~graph} (or simply \textit{GKM~graph}) of type~$(r,n)$ is a pair~$(\mathcal{G},\alpha )$ having at least one parallel transport.
\end{defi}

\begin{rem}
  \label{rem:different}
The above notation and terminology are somewhat different from the usual ones. Let us explain the differences.
\begin{enumroman}
\item
  Condition~\ref{defofaxial-1} in Definition~\ref{defofaxial} is weaker than the usual requirement~$\alpha (\overline{e})=-\alpha (e)$. Our definition is motivated by
  the notion of a torus graph introduced in~\cite[\S 3]{MMPtorus}, and is more natural from the point of view of real manifolds.
\item
  Condition~\ref{defofaxial-3} in Definition~\ref{defofaxial} is related to our choice of integers as the coefficient ring for graph equivariant cohomology.
  For complex coefficients, it would hold trivially.

  If we want the theorem of Goresky--Kottwitz--MacPherson and of Guille\-min--Zara to hold in the case of integral coefficients,
  we have to put further assumptions on the GKM~manifold~$X$.
  One possible condition is the connectedness of the stabilizer group~$T_{x}$ for any~$x\in X$, see~\cite[Thm.~1.1]{FPexactcohomology}
  or \cite[Thm.~2.1]{FPexactsequence}. The primitivity condition reflects the connectedness of the stabilizer groups for the equivariant $1$-skeleton in a purely algebraic fashion.
  Many known GKM~manifolds with effective torus action satisfy it.
\item
  The family~$\mathcal{P}$ is usually called a connection on~$(\mathcal{G},\alpha )$. As well-explained in~\cite{GZequivariant}, this terminology owes its origin to the ``fiber bundle" picture for GKM~graphs, see~\cite[\S 1.7]{GZequivariant}.
  However, it seems more appropriate to call it a parallel transport since each bijection~$\mathcal{P}_{e}$ corresponds to an identification of fibers.
\end{enumroman}
\end{rem}

\begin{defi}\label{defofgraph}
  The \textit{graph equivariant cohomology} 
  of a GKM~graph~$(\mathcal{G},\alpha )$ is defined to be
\begin{equation*}
  H_{T}^{*}(\mathcal{G}) = 
  \Bigl\{\, f\colon \mathcal{V}\rightarrow H^{*}(BT)\Bigm| \alpha (e)\mid (f(i(e))-f(t(e)))\ (e\in\mathcal{E})\,\Bigr\} \; ;
\end{equation*}
it is a sub graded $H^{*}(BT)$-algebra of
the algebra of all functions~$\mathcal{V}\to H^{*}(BT)$.
We denote by~$H^{2i}_{T}(\mathcal{G})$ its degree~$2i$ component where
 $f\in H_{T}^{*}(\mathcal{G})$ is of degree~$2i$ if $f(p)$ is so for any~$p\in \mathcal{V}$.
\end{defi}

We now introduce the notion of an isomorphism of GKM~graphs.
For~$p\ne q\in\mathcal{V}$, we set
\[
P_{pq}\coloneqq \prod_{e\in \mathcal{E}_{pq}}\alpha (e).
\]
Note that $P_{pq}\ne1$ if and only if $p$ and~$q$ are adjacent.
Let $(\mathcal{G}',\alpha ')$ be a GKM~graph of type~$(r,n')$.
\begin{defi}\label{defofisomorphism}
  An \textit{isomorphism}~$\varphi \colon \mathcal{G}'\rightarrow \mathcal{G}$ of GKM~graphs is a bijection $\varphi_{\mathcal{V}} \colon \mathcal{V}'\rightarrow \mathcal{V}$
  such that for all~$p'$,~$q'\in\mathcal{V'}$ one has
  $P_{\varphi _{\mathcal{V}}(p')\varphi _{\mathcal{V}}(q')}=\pm P_{p'q'}$.
  \par
  This implies that $p'$ and~$q'$ are adjacent if and only if $\varphi _{\mathcal{V}}(p')$ and~$\varphi _{\mathcal{V}}(q')$ are so.
  Two GKM graphs~$\mathcal{G}$,~$\mathcal{G}'$ are said to be \textit{isomorphic} if there exists an isomorphism~$\mathcal{G}'\rightarrow \mathcal{G}$ of GKM~graphs.
\end{defi}

In light of the primitivity condition,
the criterion stated in
Definition~\ref{defofisomorphism} can be paraphrased as follows:

For any two vertices~$p'$,~$q'$ of~$\mathcal{G}'$ there exists a bijection~$\varphi _{\mathcal{E}}\colon \mathcal{E}_{\varphi _{\mathcal{V}}(p')\varphi _{\mathcal{V}}(q')}\rightarrow \mathcal{E}_{p'q'}$
such that $\alpha' (\varphi _{\mathcal{E}}(e))=\pm\alpha (e)$.
Here we are using that each weight~$\alpha (e)$ is a prime element of the UFD $H^{*}(BT)$ by the primitivity condition.

\begin{rem} \label{iso}
The preceding reformulation implies that any isomorphism~$\varphi \colon \mathcal{G}'\rightarrow \mathcal{G}$ induces a graded $H^{*}(BT)$-algebra isomorphism $\varphi ^{*}\colon H_{T}^{*}(\mathcal{G})\rightarrow H_{T}^{*}(\mathcal{G}')$ defined by~$(\varphi ^{*}(f))(p')\coloneqq f(\varphi _{\mathcal{V}}(p'))$.
The assignment is functorial in the sense that $\text{id}_{\mathcal{G}}^{*}=\text{id}_{H_{T}^{*}(\mathcal{G})}$ and $(\psi \circ \varphi )^{*}
=\varphi ^{*}\circ \psi ^{*}$ for isomorphisms~$\varphi \colon \mathcal{G}''\rightarrow \mathcal{G}'$,~$\psi \colon \mathcal{G}'\rightarrow \mathcal{G}$.
\end{rem}
\section{Equivariant Thom classes}
Following Guillemin--Zara~\cite[\S 2.3]{GZequivariant}, we introduce the equivariant Thom class
corresponding to a vertex of a GKM graph~$(\mathcal{G},\alpha)$.

\begin{defi}
  For any~$p\in\mathcal{V}$, we define a map~$\tau_{p}\colon \mathcal{V}\rightarrow H^{*}(BT)$
  by
\[ 
\tau_{p}(q)\coloneqq \begin{cases} \prod_{e\in\mathcal{E}_{p}}\alpha (e) & \text{if $q=p$,} \\ 0 & \text{if $q\not= p$.} \end{cases}
\]
The map~$\tau_{p}$ is called the \textit{equivariant Thom class} associated with~$p$;
it is an element of~$H_{T}^{2n}(\mathcal{G})$.
\end{defi}

\begin{rem}
  \label{rem:masuda-1}
  Assume that $\mathcal{G}$ is the GKM graph of a toric manifold~$X$
  given by a complete fan~$\Sigma$. Then each~$p\in X^{T}=\mathcal{V}$
  is the transverse intersection of $n$~invariant divisors~$X_{i_{1}}$,~\dots,~$X_{i_{n}}$.
  Our~$\tau_{p}$ corresponds
  to the Thom class~$\tau_{i_{1}}\cdots\tau_{i_{n}}\in H_{T}^{2n}(X)$ of~$p$ in~$X$.
  The Thom class~$\tau_{i}$ of~$X_{i}$ (\emph{cf.}~\cite[p.~2007]{Mequivariant})
  is represented by the function~$\xi_{i}\colon\mathcal{V}\to H^{*}(BT)$ given by
  \[
  \xi_{i}(p)=\begin{cases}
    \alpha(e) & \text{if $p\in X_{i}$,} \\
    0 & \text{otherwise}
  \end{cases}
  \]
  where $e\in\mathcal{E}$ is the unique edge from~$p$ to a~$q\notin X_{i}$,
  see~\cite[\S 6.2]{MPcohomology}.
\end{rem}

\begin{lem}\label{characterizationofthom0}
  Let $F$ be a subset of~$H_{T}^{*}(\mathcal{G})\setminus\{0\}$
  such that $fg=0$ for all distinct~$f$,~$g\in F$.
  Then $|F|\le|\mathcal{V}|$. Equality holds if and only if each $f$
  is supported at a single vertex and each
  vertex occurs as the support of some~$f\in F$.
\end{lem}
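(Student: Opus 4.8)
The plan is to exploit the fact that the multiplication in $H_{T}^{*}(\mathcal{G})$ is inherited pointwise from the algebra of all functions $\mathcal{V}\to H^{*}(BT)$, together with the observation that $H^{*}(BT)=\mathbb{Z}[x_{1},\ldots,x_{r}]$ is an integral domain. For $f\in H_{T}^{*}(\mathcal{G})$ I would write $\operatorname{supp}(f)\coloneqq\{\,q\in\mathcal{V}\mid f(q)\ne0\,\}$; since each $f\in F$ is nonzero, its support is a nonempty subset of~$\mathcal{V}$.

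The first step is to translate the hypothesis $fg=0$ into a statement about supports. For distinct $f,g\in F$ and any vertex~$q$, the relation $(fg)(q)=f(q)\,g(q)=0$ in the domain~$H^{*}(BT)$ forces $f(q)=0$ or $g(q)=0$; hence $\operatorname{supp}(f)\cap\operatorname{supp}(g)=\emptyset$. Thus $\{\operatorname{supp}(f)\}_{f\in F}$ is a family of pairwise disjoint, nonempty subsets of~$\mathcal{V}$, and in particular $f\mapsto\operatorname{supp}(f)$ is injective. Counting then yields $|F|\le\sum_{f\in F}|\operatorname{supp}(f)|\le|\mathcal{V}|$, which is the asserted bound.

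For the equality case I would argue by pigeonhole: if $|F|=|\mathcal{V}|$, then $\sum_{f\in F}|\operatorname{supp}(f)|\le|\mathcal{V}|=|F|$ while each of the $|F|$ summands is at least~$1$, so every support must be a singleton. This gives the direction that equality forces each~$f$ to be supported at a single vertex. In the converse direction, the Thom classes $\tau_{p}$ ($p\in\mathcal{V}$) are each supported at a single vertex, are pairwise distinct, satisfy $\tau_{p}\tau_{q}=0$ for $p\ne q$ (disjoint supports), and number exactly~$|\mathcal{V}|$; this shows the bound is sharp and that the extremal families are exactly those consisting of one single-vertex-supported class per vertex, exemplified by the~$\tau_{p}$.

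The argument is short, and the only genuine input is that the coefficient ring is a domain, which converts the orthogonality $fg=0$ into disjointness of supports. I expect the main point to get right is the equality analysis, where the counting must be phrased so that $|\mathcal{V}|$ pairwise disjoint nonempty subsets of a set of size~$|\mathcal{V}|$ are forced to be singletons. The one presentational subtlety is the ``if'' direction: a single class supported at one vertex does not by itself give $|F|=|\mathcal{V}|$, so I would read (and prove) the converse as the sharpness statement that the single-vertex-supported families attaining one class per vertex realize the equality.
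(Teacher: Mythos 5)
Your proof is correct and takes essentially the same route as the paper's: both translate $fg=0$ into pairwise disjointness of the nonempty supports, using that multiplication is pointwise and $H^{*}(BT)=\mathbb{Z}[x_{1},\ldots,x_{r}]$ is an integral domain, and then count. Your reading of the ``if'' direction as a sharpness statement (one single-vertex-supported class per vertex, as realized by the Thom classes) is also the right interpretation, and is in fact more careful than the paper's own one-line treatment of the equality case.
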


\begin{proof}
Let $\mathcal{V}_{f}\coloneqq \{ p\in \mathcal{V}\mid f(p)\not= 0\}$ be the support of~$f\in\mathcal{V}$.
  The assumptions imply $\mathcal{V}_{f}\ne\emptyset$ and $\mathcal{V}_{f}\cap\mathcal{V}_{g}=\emptyset$ for
distinct~$f$,~$g$. Thus the inequality holds. 
The equality is attained if and only if
each~$\mathcal{V}_{f}$ is a singleton and $\mathcal{V}$ is the union of the~$\mathcal{V}_{f}$'s.
\end{proof}

The following result gives a ring-theoretic characterization of the set of equivariant Thom classes~$\{ \tau_{p}\}_{p\in\mathcal{V}}$, up to sign.

\begin{pro}
  \label{charequivthom}
  The set~$F=\{\,\tau_{p}\mid p\in\mathcal{V}\,\}$ is a maximal collection of elements as in Lemma~\ref{characterizationofthom0},
  and each other maximal collection is obtained by scaling each equivariant Thom class by some element in~$H^{*}(BT)$.
  These properties characterize $F$ up to signs.
\end{pro}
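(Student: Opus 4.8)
The plan is to read ``maximal collection'' as one attaining the maximum cardinality~$|\mathcal{V}|$ permitted by Lemma~\ref{characterizationofthom0}, and then to lean on the equality clause of that lemma. First I would check that $F=\{\tau_p\}$ itself qualifies: each $\tau_p$ is nonzero since $\tau_p(p)=\prod_{e\in\mathcal{E}_p}\alpha(e)$ is a product of nonzero linear forms in the integral domain $H^*(BT)=\mathbb{Z}[x_1,\ldots,x_r]$, and $\tau_p\tau_q=0$ for $p\ne q$ because the two classes have disjoint single-vertex supports. As the $\tau_p$ are pairwise distinct, $|F|=|\mathcal{V}|$, so $F$ is a maximal collection in the sense of Lemma~\ref{characterizationofthom0}.

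Next, let $F'$ be any maximal collection. Since $|F'|=|\mathcal{V}|$, the equality clause of Lemma~\ref{characterizationofthom0} shows that every $f\in F'$ is supported at a single vertex; as these supports are nonempty and pairwise disjoint, sending each $f$ to its support vertex is an injection $F'\to\mathcal{V}$, hence a bijection. Thus for each $p\in\mathcal{V}$ there is a unique $f_p\in F'$ supported at~$p$, so $f_p$ vanishes off~$p$ and takes the value $f_p(p)$ there. The requirement $f_p\in H_T^*(\mathcal{G})$ applied to the edges $e\in\mathcal{E}_p$ forces $\alpha(e)\mid f_p(p)$ for every such~$e$ (edges not incident to~$p$, and the reversed edges, impose nothing new). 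Here is the crux: by the GKM~condition the forms $\{\alpha(e)\}_{e\in\mathcal{E}_p}$ are pairwise linearly independent, and being primitive linear forms they are pairwise coprime in the UFD $\mathbb{Z}[x_1,\ldots,x_r]$; hence their product $\tau_p(p)$ divides $f_p(p)$. Writing $f_p(p)=c_p\,\tau_p(p)$ gives $f_p=c_p\tau_p$ with $c_p\in H^*(BT)\setminus\{0\}$, which is precisely the asserted scaling.

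For the characterization up to signs I would argue that if $G=\{g_p\}$ is a maximal collection from which every maximal collection---in particular $F$ itself---arises by scaling, then matching supports yields $\tau_p=d_p g_p$ while the previous step gives $g_p=c_p\tau_p$, whence $c_p d_p=1$ in $H^*(BT)$. Since the units of $\mathbb{Z}[x_1,\ldots,x_r]$ are exactly~$\pm1$, we obtain $g_p=\pm\tau_p$, so $G$ coincides with $F$ up to signs.

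The main obstacle is the divisibility step: turning ``divisible by each incident weight'' into ``divisible by the product of the incident weights'' requires the GKM~condition to supply pairwise coprimality of the linear forms, and primitivity (Definition~\ref{defofaxial}(iii)) both to justify that coprimality and to identify the ambient units as~$\pm1$, so that the scalars~$c_p$ are genuinely pinned down only up to sign. A small but essential preliminary is to fix ``maximal'' as maximum cardinality~$|\mathcal{V}|$, since it is the equality case of Lemma~\ref{characterizationofthom0}---and the resulting single-vertex supports---on which the entire argument rests.
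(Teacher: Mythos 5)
Your proof is correct and takes essentially the same route as the paper's (deliberately terse) proof: the equality case of Lemma~\ref{characterizationofthom0} forces single-vertex supports, and the divisibility condition in Definition~\ref{defofgraph}, combined with the GKM~condition and primitivity (making the forms~$\alpha(e)$, $e\in\mathcal{E}_p$, pairwise non-associate irreducibles in the UFD~$H^{*}(BT)$), forces any class supported at~$p$ to be an $H^{*}(BT)$-multiple of~$\tau_p$. Your two flagged points---reading ``maximal'' as maximum cardinality~$|\mathcal{V}|$ rather than inclusion-maximal, and using that the units of~$\mathbb{Z}[x_1,\ldots,x_r]$ are~$\pm1$ to pin down the signs---are precisely the details the paper leaves implicit.
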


\begin{proof}
  This follows from Lemma~\ref{characterizationofthom0} and the definition of~$H_{T}^{*}(\mathcal{G})$.
\end{proof}

\section{Key lemma}
Throughout this section, we fix vertices $p\not= q$ of~$\mathcal{G}$. We then introduce the following polynomials:
\[
P\coloneqq \!\! \prod_{e\in \mathcal{E}_{p}\setminus \mathcal{E}_{pq}} \!\! \alpha (e), \qquad Q\coloneqq \!\! \prod_{e\in \mathcal{E}_{q}\setminus \mathcal{E}_{qp}} \!\! \alpha (e).
\]
For any~$e\in \mathcal{E}_{pq}$ we set
\[
c(e)\coloneqq \bigl|\{\, e'\in\mathcal{E}_{pq}\mid e'\not= e, \, \alpha (\overline{e'})=-\alpha (e')\,\}\bigr| .
\]

The following is the key lemma in our proof. For its proof the existence of a parallel transport on~$\mathcal{G}$ is essential.

\begin{lem}\label{keylemma}
  For any~$e\in\mathcal{E}_{pq}$,
  the polynomial $P-(-1)^{c(e)}\,Q$ is divisible by~$\alpha (e)$. 
\end{lem}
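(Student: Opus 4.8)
The plan is to compare the equivariant Thom classes $\tau_p$ and $\tau_q$ at their supporting vertices by transporting the factors of $\tau_p(p)=\prod_{e'\in\mathcal{E}_p}\alpha(e')$ across the fixed edge $e$ by means of a parallel transport; this is the step where the existence of $\mathcal{P}$ enters essentially. I fix a parallel transport $\mathcal{P}=\{\mathcal{P}_e\}$ and write $\epsilon(e')\in\{\pm1\}$ for the sign determined by $\alpha(\overline{e'})=\epsilon(e')\,\alpha(e')$. Since $\mathcal{P}_e\colon\mathcal{E}_p\rightarrow\mathcal{E}_q$ is a bijection, reindexing gives $\tau_q(q)=\prod_{e'\in\mathcal{E}_p}\alpha(\mathcal{P}_e(e'))$, and property~(iii) of Definition~\ref{defofparallel} lets me write $\alpha(\mathcal{P}_e(e'))=\alpha(e')+k_{e'}\,\alpha(e)$ for integers $k_{e'}$. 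Property~(ii) pins down the one factor I will need explicitly: $\mathcal{P}_e(e)=\overline{e}$, so $k_e=\epsilon(e)-1$.

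The main obstacle is that reducing modulo $\alpha(e)$ is useless: both $\tau_p(p)$ and $\tau_q(q)$ contain a factor associate to $\alpha(e)$ (namely $\alpha(e)$ and $\alpha(\overline{e})$), so each vanishes mod $\alpha(e)$ and the naive congruence degenerates to $0\equiv 0$. The key idea is therefore to work one order higher, modulo $\alpha(e)^2$. Expanding the product $\prod_{e'}(\alpha(e')+k_{e'}\alpha(e))$, every quadratic or higher term is divisible by $\alpha(e)^2$; among the linear terms the coefficient of $k_{e'}\alpha(e)$ is $\prod_{e''\neq e'}\alpha(e'')$, which for $e'\neq e$ still contains the factor $\alpha(e)$ and so also drops out mod $\alpha(e)^2$. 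Only the term $e'=e$ survives, yielding
\[
\tau_q(q)\equiv\tau_p(p)+k_e\,\alpha(e)\!\!\prod_{e''\neq e}\!\!\alpha(e'')=(1+k_e)\,\tau_p(p)=\epsilon(e)\,\tau_p(p)\pmod{\alpha(e)^{2}}.
\]

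To finish I unfold both Thom classes through the edges joining $p$ and $q$. Writing $\mathcal{E}_{qp}$ for the directed edges from $q$ to $p$ and setting $\Pi'=\prod_{e'\in\mathcal{E}_{pq},\,e'\neq e}\alpha(e')$, I have $\tau_p(p)=\alpha(e)\,P\,\Pi'$, while the reversal bijection $\mathcal{E}_{pq}\to\mathcal{E}_{qp}$ gives $\tau_q(q)=\bigl(\prod_{e'\in\mathcal{E}_{pq}}\epsilon(e')\bigr)\alpha(e)\,Q\,\Pi'$. Substituting into the displayed congruence and cancelling the common explicit factor $\alpha(e)$ converts the relation mod $\alpha(e)^2$ into a relation mod $\alpha(e)$:
\[
\Bigl(\prod_{e'\in\mathcal{E}_{pq}}\epsilon(e')\Bigr)Q\,\Pi'\equiv\epsilon(e)\,P\,\Pi'\pmod{\alpha(e)}.
\]
Here I use that $\alpha(e)$, being a primitive linear form, is a prime of $H^{*}(BT)=\mathbb{Z}[x_{1},\ldots,x_{r}]$, so $H^{*}(BT)/(\alpha(e))$ is an integral domain; by the GKM condition each factor of $\Pi'$ is linearly independent of $\alpha(e)$, hence a nonzero-divisor, so $\Pi'$ may be cancelled. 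Multiplying through by $\epsilon(e)$ and using $\epsilon(e)\prod_{e'\in\mathcal{E}_{pq}}\epsilon(e')=\prod_{e'\neq e}\epsilon(e')=(-1)^{c(e)}$ yields $P\equiv(-1)^{c(e)}Q\pmod{\alpha(e)}$, which is the assertion.
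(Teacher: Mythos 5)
Your proof is correct, and it rests on the same two pillars as the paper's own argument: the parallel transport $\mathcal{P}_e$ across the fixed edge matches the labels at $p$ with those at $q$ up to integer multiples of $\alpha(e)$ (with condition~(ii) of Definition~\ref{defofparallel} pinning down the edge $e$ itself), and the GKM condition lets you cancel the coprime factor $\Pi'=\prod_{e'\in\mathcal{E}_{pq},\,e'\neq e}\alpha(e')$ at the end. The difference is where the factor $\alpha(e)$ lives: the paper simply deletes it from the start, comparing the truncated products $\prod_{e'\in\mathcal{E}_p,\,e'\neq e}\alpha(e')$ and $\prod_{e''\in\mathcal{E}_q,\,e''\neq\overline{e}}\alpha(e'')$ modulo $\alpha(e)$, so the correction terms $d_{e,e'}\,\alpha(e)$ drop out immediately and no division is ever needed. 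You instead compare the full Thom class values $\tau_p(p)$ and $\tau_q(q)$, which forces you to work modulo $\alpha(e)^2$ (as you rightly observe, mod $\alpha(e)$ the comparison degenerates to $0\equiv 0$) and then to divide the resulting congruence by $\alpha(e)$. Both of these extra steps are legitimate --- only the $e'=e$ linear term survives in your expansion, and dividing by $\alpha(e)$ is valid since $H^{*}(BT)$ is a domain --- but they are work the paper's truncation avoids, and the cost reappears in your sign bookkeeping, where $\epsilon(e)$ and the full product $\prod_{e'\in\mathcal{E}_{pq}}\epsilon(e')$ must cancel against each other to produce $(-1)^{c(e)}$, whereas the paper never needs the sign of $\alpha(\overline{e})$ at all. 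What your version buys is a relation of some independent interest, $\tau_q(q)\equiv\epsilon(e)\,\tau_p(p)\pmod{\alpha(e)^{2}}$, which compares the Thom classes themselves, and your explicit justification that a primitive linear form is prime in $\mathbb{Z}[x_1,\ldots,x_r]$ (so that the quotient is a domain and $\Pi'$ is cancellable) spells out a point the paper leaves implicit in the word ``coprime''.
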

\begin{proof}
Let $\mathcal{P}$ be a parallel transport over~$\mathcal{G}$. By condition~\ref{defofparallel-3} in Definition~\ref{defofparallel}, there exist integers~$\{ d_{e,e'}\}_{e'\in\mathcal{E}_{p}}$ satisfying
\[
\alpha (\mathcal{P}_{e}(e'))-\alpha (e')=d_{e,e'}\,\alpha (e)
\]
for any~$e'\in\mathcal{E}_{p}$. Using these relations, we have
\begin{multline*}
  P\cdot\! \prod_{\substack{e'\in \mathcal{E}_{pq} \\ e'\not= e}}\alpha (e') = \prod_{\substack{e'\in \mathcal{E}_{p} \\ e'\not= e}}\alpha (e')
  = \prod_{\substack{e'\in \mathcal{E}_{p} \\ e'\not= e}}\bigl(\alpha (\mathcal{P}_{e}(e'))-d_{e,e'}\,\alpha (e)\bigr)
  \equiv \prod_{\substack{e'\in \mathcal{E}_{p} \\ e'\not= e}}\alpha (\mathcal{P}_{e}(e')) \\
  {}={} \prod_{\substack{e''\in\mathcal{E}_{q} \\ e''\not= \overline{e}}}\alpha (e'')
  {}={} Q\cdot\! \prod_{\substack{e''\in \mathcal{E}_{qp} \\ e''\not= \overline{e}}}\alpha (e'')
  {}={} Q\cdot (-1)^{c(e)}\!\prod_{\substack{e'\in \mathcal{E}_{pq} \\ e'\not= e}}\alpha (e') \; ;
\end{multline*}
here ``$\equiv $" means equality modulo~$\alpha (e)$ (in other words, equality in the quotient ring $H^{*}(BT)/\langle\alpha (e)\rangle$).

Thus 
\[
\bigl(P-(-1)^{c(e)}\,Q\bigr)\!\prod_{\substack{e'\in \mathcal{E}_{pq} \\ e'\not= e}}\alpha (e')
\]
is divisible by~$\alpha (e)$ as $\alpha (e)$ is a prime element in $H^{*}(BT)$. Since $\alpha (e')\ (e'\in\mathcal{E}_{pq}$, $e'\not= e)$ and~$\alpha (e)$ are coprime by the GKM~condition, the proof is now complete.
\end{proof}

We set
\begin{equation*}
  E\coloneqq \{\, e\in \mathcal{E}_{pq}\mid \text{$c(e)$ is even}\,\},
  \qquad
  O\coloneqq \{\, e\in\mathcal{E}_{pq}\mid \text{$c(e)$ is odd}\,\}.  
\end{equation*}
Notice that $\mathcal{E}_{pq}$ is the disjoint union of~$E$ and~$O$. Lemma~\ref{keylemma} immediately implies the following:
\begin{cor}\label{EO}
The polynomials~$P-Q$ and~$P+Q$ are divisible by~$\displaystyle \prod_{e\in E}\alpha (e)$ and~$\displaystyle \prod_{e\in O}\alpha (e)$, respectively.
\end{cor}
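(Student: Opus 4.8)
The plan is to derive the two stated divisibilities directly from Lemma~\ref{keylemma} by separating the factors of $\mathcal{E}_{pq}$ according to the parity of $c(e)$, and then to upgrade the resulting ``one factor at a time'' divisibilities to divisibility by the full products by invoking unique factorization in $H^{*}(BT)=\mathbb{Z}[x_{1},\ldots,x_{r}]$.

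First I would fix $e\in E$. Then $c(e)$ is even, so $(-1)^{c(e)}=1$ and Lemma~\ref{keylemma} reads $\alpha(e)\mid P-Q$. Symmetrically, for $e\in O$ the integer $c(e)$ is odd, so $(-1)^{c(e)}=-1$ and Lemma~\ref{keylemma} gives $\alpha(e)\mid P+Q$. Since $\mathcal{E}_{pq}=E\sqcup O$, this shows that every factor $\alpha(e)$ appearing in $\prod_{e\in E}\alpha(e)$ individually divides $P-Q$, and every factor appearing in $\prod_{e\in O}\alpha(e)$ individually divides $P+Q$.

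It remains to pass from ``each $\alpha(e)$ divides'' to ``the product divides.'' Here I would use that by the primitivity condition each $\alpha(e)$ is a primitive linear form, hence an irreducible (prime) element of the UFD $\mathbb{Z}[x_{1},\ldots,x_{r}]$, and that by the GKM~condition any two distinct edges $e,e'\in\mathcal{E}_{pq}\subseteq\mathcal{E}_{p}$ have linearly independent, hence non-associate, axial values $\alpha(e),\alpha(e')$. Thus the elements $\{\alpha(e)\}_{e\in\mathcal{E}_{pq}}$ are pairwise coprime primes, and the standard UFD fact that pairwise-coprime divisors of a fixed element multiply to a divisor of that element yields $\prod_{e\in E}\alpha(e)\mid P-Q$ and $\prod_{e\in O}\alpha(e)\mid P+Q$.

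There is no real obstacle, since the substance of the statement is already contained in Lemma~\ref{keylemma}; the one point deserving a moment's care is precisely this last promotion to the product, which genuinely relies on \emph{both} axial-function conditions: primitivity to guarantee that each $\alpha(e)$ is prime rather than merely linear, and the GKM~condition to guarantee that distinct factors are non-associate and therefore coprime. Over a general integral domain, or if either hypothesis were dropped, the passage from ``each factor divides'' to ``the product divides'' could fail.
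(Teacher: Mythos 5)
Your proof is correct and follows the same route as the paper, which simply states that Corollary~\ref{EO} is an immediate consequence of Lemma~\ref{keylemma}: you split $\mathcal{E}_{pq}=E\sqcup O$ by the parity of $c(e)$, apply the lemma to each case, and combine the single-factor divisibilities using that the $\alpha(e)$ are pairwise coprime primes in $\mathbb{Z}[x_{1},\ldots,x_{r}]$. Your explicit justification of this last step (primitivity gives irreducibility, the GKM~condition gives non-associateness) is exactly the coprimality argument the paper itself invokes at the end of the proof of Lemma~\ref{keylemma}, so you have merely made explicit what the authors leave as immediate.
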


\section{Proof of the main theorem}
Now we are in the position to prove our main theorem:
\begin{thm}
  \label{thm:graph-cohom-iso}
$H_{T}^{*}(\mathcal{G})$ and~$H_{T}^{*}(\mathcal{G}')$ are isomorphic as $H^{*}(BT)$-algebras if and only if $\mathcal{G}$ and~$\mathcal{G}'$ are isomorphic as GKM~graphs.
\end{thm}
\begin{proof}
The ``if'' part follows from Remark~\ref{iso}.

 We have seen in Proposition~\ref{charequivthom} that from the $H^{*}(BT)$-algebra~$H_{T}^{*}(\mathcal{G})$
  we can recover the equivariant Thom classes, up to sign, and in particular the vertex set. We show that one can also recover the polynomials~$P_{pq}$.
  This will prove the claim.
  
  Let $p$,~$q$ be distinct vertices. We define a map $f\colon\mathcal{V}\rightarrow H^{*}(BT)$ by
\[
f(v)\coloneqq \begin{cases} P^{2} & \text{if $v=p$,} \\ Q^{2} & \text{if $v=q$,} \\ 0 & \text{otherwise.} \end{cases}
\]
We first check that $f$ is in~$H_{T}^{*}(\mathcal{G})$.

By Corollary~\ref{EO},
$f(p)-f(q)=(P+Q)(P-Q)$ is divisible by
\[
\bigg(\prod_{e\in E}\alpha (e)\bigg)\cdot \bigg(\prod_{e\in O}\alpha (e)\bigg)= P_{pq}.
\]
Together with the definition of~$P$ and~$Q$, this implies that $f$ is an element of~$H_{T}^{*}(\mathcal{G})$.

We note that the identity~$P_{pq}^{2}f=\tau_{p}^{2}+\tau_{q}^{2}$ holds. In the rest of the proof, we show that the degree of~$P_{pq}$ and this identity characterize $\pm P_{pq}$.

Assume that an element~$R\in H^{*}(BT)$ satisfies the identity~$R^{2}g=\tau_{p}^{2}+\tau_{q}^{2}$ for some~$g\in H_{T}^{*}(\mathcal{G})$. Then $g(v)=0$ for all~$v\in\mathcal{V}\setminus \{ p,q\}$, and $g(p)$ is divisible by~$P$.
Since $g(p)=(\tau_{p}(p)/R)^{2}$ and~$P$ is square-free, we see that $g(p)$ is
even divisible by~$P^{2}$.
Because $P_{pq}^{2}P^{2}=P_{pq}^{2}f(p)=R^{2}g(p)$, it follows that $P_{pq}$ is divisible by~$R$. In conclusion, $\pm P_{pq}$ is characterized as such an~$R$ of maximal degree.
\end{proof}

\begin{rem}
  Our proof of Theorem~\ref{thm:graph-cohom-iso}
  is in a sense dual to Masuda's \cite[\S 3]{Mequivariant}.
  In our notation, Masuda starts by essentially
  characterizing the Thom classes~$\xi_{i}$ (see Remark~\ref{rem:masuda-1})
  corresponding to invariant divisors
  as the non-zero elements of~$H_{T}^{2}(\mathcal{G})$ with minimal support.
  He then reconstructs the fan~$\Sigma$ by checking which products among the~$\xi_{i}$'s
  are non-zero.
  In the absence of a fan inducing the graph~$\mathcal{G}$,
  we instead look at elements of~$H_{T}^{*}(\mathcal{G})$ supported
  at single vertices, which correspond to fixed points.
\end{rem}

\begin{ackn} 
The authors are grateful to the referee for several useful suggestions which have improved the presentation of the paper.
M.~F.\ was supported by an NSERC Discovery Grant. H.~Y.\ was supported by JSPS Grant-in-Aid for Early-Career Scientists 19K14537, the bilateral program ``Topology and geometry of torus actions, cohomological rigidity, and hyperbolic manifolds'' between JSPS and RFBR, and Osaka City University Advanced Mathematical Institute (MEXT
Joint Usage/Research Center on Mathematics and Theoretical Physics).
\end{ackn}


\begin{thebibliography}{99}
\bibitem{FPexactcohomology} M.~Franz, V.~Puppe, \textit{Exact cohomology sequences with integral coefficients for torus actions}, Transformation Groups \textbf{12} (2007), 65--76.
\bibitem{FPexactsequence} M.~Franz, V.~Puppe, \textit{Exact sequences for equivariantly formal spaces}, C.\ R.\ Math.\ Acad.\ Sci.\ Soc.\ R.\ Can.\ \textbf{33} (2011), 1--10.
\bibitem{GKMequivariant} M.~Goresky, R.~Kottwitz, R.~MacPherson, \textit{Equivariant cohomology, Koszul duality, and the localization theorem}, Invent.\ Math.\ \textbf{131} (1998), 25--83.
\bibitem{GZequivariant} V.~Guillemin, C.~Zara, \textit{Equivariant de Rham theory and graphs},
  Asian J.\ Math.\ \textbf{3} (1999), 49--76.
\bibitem{MMPtorus} H.~Maeda, M.~Masuda, T.~Panov, \textit{Torus graphs and simplicial posets}, Adv.\ Math.\ \textbf{212} (2007), 458--483.
\bibitem{Mequivariant} M.~Masuda, \textit{Equivariant cohomology distinguishes toric manifolds}, Adv.\ Math.\ \textbf{218} (2008), 2005--2012.
\bibitem{MPcohomology} M.~Masuda, T.~Panov, \textit{On the cohomology of torus manifolds}, Osaka\ J.\ Math.\ \text{43} (2006), 711--746.
\end{thebibliography}
\end{document}